\newtheorem{thm}{Theorem}
\newtheorem{lem}[thm]{Lemma}
\newtheorem{obs}[thm]{Observation}
\theoremstyle{remark} 
\newtheorem{ex}[thm]{Example}
\title{Coronas and domination subdivision number of a~graph}
\author{M. Dettlaff$^{\dagger}$, M. Lema\'nska$^{\dagger}$, J. Topp$^{\ddagger}$, P. \.Zyli\'nski$^{\ddagger}$
\\
\\
$^{\dagger}${\small Faculty of Applied Physics and Mathematics } \\ {\small Gda\'nsk University of Technology,  80-233 Gda\'nsk, Poland }\\ {\small {\tt \{mdettlaff,magda\}\@@mif.pg.gda.pl}}
\\
$^{\ddagger}${\small Faculty of Mathematics, Physics and Informatics 
}\\
{\small  University of Gda\'nsk, 80-952 Gda\'nsk, Poland} \\{\small {\tt \{jtopp,zylinski\}\@@inf.ug.edu.pl}}}
\date{}
\begin{document}
\maketitle
\begin{abstract} 
\noindent In this paper, 
for a graph $G$
and a family of partitions $\mathcal{P}$ of  vertex neighborhoods of  $G$, we define the general corona $G \circ \mathcal{P}$ of $G$.  
Among several properties of this new operation, 
we focus on application general coronas to a~new kind of characterization 
of trees with the domination subdivision number equal to 3.
\end{abstract}

\noindent {\it Keywords:} Domination; domination subdivision number;  tree; corona.

\noindent {\it AMS Subject Classification Numbers:}  05C69; 05C05; 05C99.

\section{Introduction}
In this paper we follow the notation and terminology of \cite{fund}. Let $G = (V(G), E(G))$ be a (finite, simple, undirected) graph of order $n = |V (G)|$. 
For a vertex $v$ of $G$, its {\em neighborhood}, denoted by $N_G(v)$, is the set of all vertices adjacent to $v$. The cardinality of $N_G(v)$, denoted by $d_G(v)$, is called the {\em degree} of $v$.
%
%
%
A vertex $v$ is a {\em leaf}{\/} of $G$ if  $d_G(v)=1$. Every neighbor of a leaf is called  a {\em support} vertex. A~{\em strong support} vertex is a vertex adjacent
to at least two leaves. 


A subset $D$ of $V(G)$ is said to be \emph{dominating} in $G$ if every vertex belonging to $V(G)-D$ has at least one neighbor in $D.$ The cardinality of a~smallest dominating set in $G$, denoted by $\gamma (G)$, is called the {\it domination number} of $G$.  
A subset $S$ of vertices in $G$ is called a $2$-{\it packing} if every two distinct vertices belonging to $S$ are at distance greater then $2$.  

The {\em corona} of graphs $G_1$ and $G_2$ 
is a graph $G = G_1 \circ G_2$ resulting from the disjoint union of $G_1$ and $|V (G_1)|$ 
copies of $G_2$ in which each vertex~$v$ of $G_1$ is adjacent to all vertices of the copy of $G_2$ corresponding to~$v$. 

For a graph $G$, the {\it subdivision} of an edge $e=uv$ with a new vertex $x$ is an operation which leads to a graph~$G'$ with $V(G')=V(G)\cup \{x\}$ and $E(G')=(E(G)-\{uv\})\cup \{ux,xv\}$. 
The graph obtained from $G$ by  the replacing every edge $e=uv$ with a~path $(u,x_1,x_2,v)$, where $x_1$ and $x_2$ are new vertices, is called the $2$-{\it subdivision} of $G$ and is denoted by $S_2(G)$.

For a graph $G$ and a family $\mathcal{P}=\{\mathcal{P}(v): v\in V(G)\}$, where $\mathcal{P}(v)$ is a partition of the set $N_G(v)$, by $G\circ \mathcal{P}$ we denote the graph in which

$$V(G\circ \mathcal{P})=\{(v,1)\colon v\in V(G)\}\cup \bigcup_{v\in V(G)} \{(v, A)\colon A\in \mathcal{P}(v)\}$$
and
$$E(G\circ \mathcal{P})= \!\!\bigcup_{v\in V(G)}\!\! \{(v,1)(v,A)\colon \! A\in \mathcal{P}(v)\}\cup \!\!\bigcup_{uv\in E(G)}\!\!\{(v,A)(u,B)\colon \! (u\in A)  \wedge  (v\in B) \}.$$

\noindent The family $\mathcal{P}$ is called a vertex neighborhood partition of $G$ and the graph $G\circ \mathcal{P}$ is called a $\mathcal{P}$-{\it corona} (or shortly {\it general corona}) of $G$. 
The set $\{(v,1)\colon v\in V(G)\}$ of vertices of $G\circ \mathcal{P}$ is denoted by $Ext(G\circ \mathcal{P})$  and its elements are called the {\it external} vertices. Those vertices of $G\circ \mathcal{P}$ which are not external, are said to be {\it internal}.

\begin{figure}[ht] 
\centerline{\epsfxsize=4in \epsffile{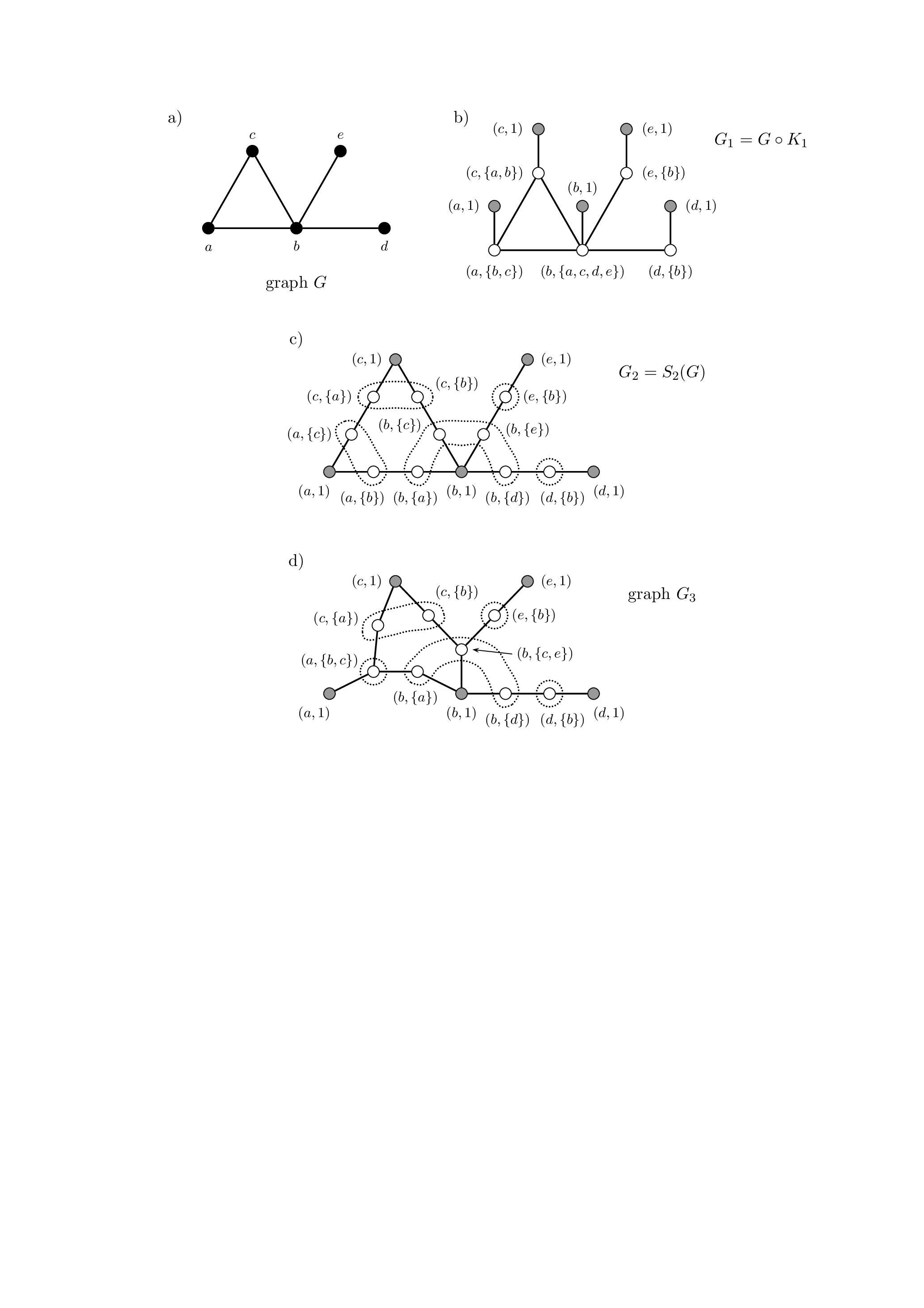}}
\caption{Graph $G$  and its exemplary coronas.}
\label{rys1}
\end{figure}

\begin{ex}\label{ex1}
Let $G$ be the graph shown in Figure \ref{rys1}(a) and let $\mathcal{P}=\{\mathcal{P}(a), \mathcal{P}(b), \mathcal{P}(c),$ $ \mathcal{P}(d), \mathcal{P}(e)\}$, where  $\mathcal{P}(a)=\{N_G(a)\}=\{\{b,c\}\},$ 
$\mathcal{P}(b)=\{N_G(b)\}=\{\{a,c,d,e\}\},$ $\mathcal{P}(c)=\{N_G(c)\}=\{\{a,b\}\},$ $\mathcal{P}(d)=\{N_G(d)\}=\{\{b\}\},$
$\mathcal{P}(e)=\{N_G(e)\}=\{\{b\}\}$. Then the $\mathcal{P}$-corona $G\circ \mathcal{P}$ is the graph $G_1$ given in Figure~\ref{rys1}(b) and in fact it is the corona $G\circ K_1$.      

Now  if $\mathcal{P}=\{\mathcal{P}(v)\colon v\in V(G)\}$ and $\mathcal{P}(v)$ is the family of all $1$-element subsets of $N_G(v)$, that is   $\mathcal{P}(a)=\{\{b\},\{c\}\},$ 
$\mathcal{P}(b)=\{\{a\},\{c\},\{d\},\{e\}\},$ $\mathcal{P}(c)=\{\{a\},\{b\}\},$ $\mathcal{P}(d)=\{ \{b\}\},$ $\mathcal{P}(e)=\{\{b\}\}$, then $G\circ \mathcal{P}$ is the graph $G_2$ shown in Figure~\ref{rys1}(c) and in this case it is the $2$-subdivision $S_2(G)$ of $G$.

Finally, let us consider -- for an example -- the case where $\mathcal{P}=\{\mathcal{P}(v)\colon v\in V(G)\}$ and $\mathcal{P}(a)=\{\{b,c\}\},$ 
$\mathcal{P}(b)=\{\{a\},\{c,e\},\{d\}\},$ $\mathcal{P}(c)=\{\{a\},\{b\}\},$ $\mathcal{P}(d)=\{\{b\}\},$ $\mathcal{P}(e)=\{\{b\}\}$. In this case  $G\circ \mathcal{P}$ is the graph $G_3$ shown in Figure \ref{rys1}(d). This graph is an example of possible general coronas of $G$ which are ``between'' the corona $G\circ K_1$ and the $2$-subdivision $S_2(G)$. 

\end{ex}

From the definition of general corona  it easily follows (what we have seen in the above  example) that: 
\begin{itemize}
\item [a)] if $\mathcal{P}(v)=\{N(v)\}$ for every $v\in V(G),$  then $G\circ \mathcal{P}$ is the corona $G\circ K_1$ (and the vertices of $G$ are internal vertices in $G\circ K_1$);
\item[b)] if $\mathcal{P}(v)=\{ \{u \}\colon u \in N_G(v) \}$  for every $v\in V(G),$   then $G\circ \mathcal{P}$  is the $2$-subdivision $S_2(G)$ (and the vertices of $G$ are external vertices of $S_2(G)$). 
\end{itemize}

Let $H$ be a subgraph of a graph $G$. The {\it contraction} of $H$ to a vertex is the replacement of $H$ by a single vertex $k$. Each edge that joined a vertex $v\in V(G)-V(H)$ to a vertex in $H$ is replaced by an edge with endpoints $v$ and~$k$.
 
Let $\mathcal{P}=\{\mathcal{P}(v)\colon v\in V(G)\}$ and $\mathcal{P}'=\{\mathcal{P}'(v)\colon v\in V(G)\}$ be two vertex neighborhood partitions of $G$. We say that $\mathcal{P}'$ is a~{\it refinement} of $\mathcal{P}$ and write $\mathcal{P}'\prec \mathcal{P}$ if for every $v\in V(G)$ and every $A\in \mathcal{P}'(v)$ there exists $B\in \mathcal{P}(v)$ such that $A\subseteq B$. If $\mathcal{P}'\prec \mathcal{P}$, then the general corona $G\circ \mathcal{P}'$ is said to be {\it refinement} of $G\circ \mathcal{P}$. In this case we write $G\circ \mathcal{P}'\prec G\circ \mathcal{P}$ and say that $G\circ \mathcal{P}'$ has been obtained from $G\circ \mathcal{P}$ by {\it splitting} some of its internal vertices. On the other hand, $G\circ \mathcal{P}$ can be obtained from $G\circ \mathcal{P}'$ contracting some of its internal vertices. For example,  $G_2$ from Figure~\ref{rys1} is refinement of $G_3$ and $G_3$ is refinement of $G_1$, so $G_2\prec G_3\prec G_1$.
Notice that in general, a graph $G\circ \mathcal{P}$ can be treated as a graph obtained from corona $G\circ K_1,$ where every support vertex $v$ is splitted according to the partition $\mathcal{P}(v)$ of $N_G(v).$  Let us again consider the graphs $G$, $G_1$, $G_2$ and $G_3$ from  Figure~\ref{rys1}.  The graph $G_2= S_2(G)$ can be obtained from  $G\circ K_1$ by splitting  support vertex into maximum possible number of vertices. Moreover, if in $G\circ K_1$ we split the vertex $(c,\{a,b\})$ into two vertices $(c,\{a\})$ and $(c,\{b\})$, the vertex $(b,\{a,c,d,e\})$ into three vertices: $(b,\{a\})$, $(b,\{c,e\})$, $(b,\{d\})$, and we leave other support vertices unsplitted, then we obtain $G_3$. On the other hand, $G_3$ can be obtained from $G_2=S_2(G)$ contracting $(a,\{c\})$ and $(a,\{b\})$, and also $(b,\{c\})$ and $(b,\{e\})$\

The contraction (splitting) of internal vertices is called an {\it internal contraction} ({\it  splitting}). We have  the following observations.

\begin{obs}\label{nowa}Let $T$ be a tree with at least three vertices. Then the following properties are equivalent:
\begin{itemize}
\item[$1.$] $T$ is a general corona of a tree.
\item[$2.$] There exists a tree $T'$ such that $T$ is obtained from the $2$-subdivision $S_2(T')$ by a~sequence of internal contractions.
\item[$3.$] There exists a tree $T'$ such that $T$ is obtained from the corona $T'\circ K_1$ by a~sequence of internal splittings.\qed
\end{itemize}
\end{obs}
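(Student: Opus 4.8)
The plan is to use the dictionary, set up in the paragraphs preceding the observation, between general coronas $T'\circ\mathcal P$ of a \emph{fixed} graph $T'$ and vertex neighborhood partitions $\mathcal P$ of $T'$: the correspondence $\mathcal P\leftrightarrow T'\circ\mathcal P$ is injective, and the refinement order $\prec$ on partition families is realized graph-theoretically by internal splitting (passing to a finer $\mathcal P$) and internal contraction (passing to a coarser $\mathcal P$). Introduce the extreme families $\mathcal P_{\min}(v)=\{\{u\}\colon u\in N_{T'}(v)\}$ and $\mathcal P_{\max}(v)=\{N_{T'}(v)\}$; by items (a) and (b) recorded above, $T'\circ\mathcal P_{\min}=S_2(T')$ and $T'\circ\mathcal P_{\max}=T'\circ K_1$, and for \emph{every} vertex neighborhood partition $\mathcal P$ of $T'$ one has $\mathcal P_{\min}\prec\mathcal P\prec\mathcal P_{\max}$ (a singleton lies in the block of $\mathcal P(v)$ containing it; every block of $\mathcal P(v)$ is contained in $N_{T'}(v)$). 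So all three conditions assert, in different words, that $T=T'\circ\mathcal P$ for some such $\mathcal P$, with the same tree $T'$, and the proof just makes this precise in both directions.

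For $(1)\Rightarrow(2)$ and $(1)\Rightarrow(3)$: write $T=T'\circ\mathcal P$ with $T'$ a tree. From $\mathcal P_{\min}\prec\mathcal P$ and the refinement discussion, $T'\circ\mathcal P_{\min}\prec T'\circ\mathcal P$, i.e.\ $T$ is obtained from $S_2(T')=T'\circ\mathcal P_{\min}$ by internal contractions; to present this as a genuine \emph{sequence} of contractions, factor the refinement $\mathcal P_{\min}\prec\mathcal P$ through a chain $\mathcal P_{\min}=\mathcal Q_0\prec\mathcal Q_1\prec\dots\prec\mathcal Q_m=\mathcal P$ in which each $\mathcal Q_{i}$ is obtained from $\mathcal Q_{i+1}$ by merging two blocks in the partition of a single vertex (the standard way of moving up the partition lattice), so that each step $T'\circ\mathcal Q_{i+1}\to T'\circ\mathcal Q_i$ is an elementary internal contraction. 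Dually, $\mathcal P\prec\mathcal P_{\max}$ gives $T'\circ\mathcal P\prec T'\circ\mathcal P_{\max}=T'\circ K_1$, and the analogous factorization exhibits $T$ as obtained from $T'\circ K_1$ by a sequence of internal splittings, which is (3).

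For $(2)\Rightarrow(1)$ and $(3)\Rightarrow(1)$: since $S_2(T')=T'\circ\mathcal P_{\min}$ and $T'\circ K_1=T'\circ\mathcal P_{\max}$ are themselves general coronas of the tree $T'$, it suffices to observe that the class of general coronas of $T'$ is closed under internal contraction and under internal splitting; but by the very definition of these operations, an internal contraction carries $T'\circ\mathcal Q$ to $T'\circ\mathcal Q''$ with $\mathcal Q\prec\mathcal Q''$, and an internal splitting carries it to $T'\circ\mathcal Q''$ with $\mathcal Q''\prec\mathcal Q$. Iterating, any graph reachable from $S_2(T')$ by internal contractions, or from $T'\circ K_1$ by internal splittings, equals $T'\circ\mathcal P$ for some vertex neighborhood partition $\mathcal P$ of $T'$, hence is a general corona of a tree. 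The step that genuinely requires care is the identification of the graph-level operation with the combinatorial one: contracting the internal vertices $(v,A_1),\dots,(v,A_k)$ of $T'\circ\mathcal Q$ to a single vertex must be checked, against the formula for $E(T'\circ\mathcal Q'')$, to yield exactly $T'\circ\mathcal Q''$ with the blocks $A_1,\dots,A_k$ of $\mathcal Q(v)$ replaced by their union; here one uses that internal vertices sharing an external neighbor $(v,1)$ are pairwise non-adjacent, so that these are precisely the configurations an internal contraction (as opposed to an arbitrary contraction of an internal-vertex subgraph, which would destroy the structure) acts on. Finally, the hypothesis $|V(T)|\ge 3$ is invoked only to exclude the degenerate small cases (an isolated vertex, a single edge), where the correspondence $\mathcal P\leftrightarrow T'\circ\mathcal P$ behaves anomalously; everything else is bookkeeping. \qed
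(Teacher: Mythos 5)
Your proposal is correct and follows essentially the same route as the paper, which states this observation with no separate proof, treating it as immediate from the preceding definitions of refinement, internal contraction/splitting, and the identification of the extreme partitions $\mathcal P_{\min}$, $\mathcal P_{\max}$ with $S_2(T')$ and $T'\circ K_1$. Your explicit factorization through a chain in the partition lattice, and your remark that an ``internal contraction'' must be read as contracting internal vertices sharing a common external neighbor (an arbitrary contraction of adjacent internal vertices would leave the class), merely make precise what the paper leaves implicit.
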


\begin{obs}\label{ext}
If $G$ is a general corona of a tree, then $Ext(G)$ is a dominating $2$-packing of $G$ containing all leaves of $G$.
\end{obs}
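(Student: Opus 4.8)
The plan is to use the characterization from Observation~\ref{nowa}, in particular the equivalence of being a general corona of a tree with being obtainable from a $2$-subdivision $S_2(T')$ by internal contractions. First I would verify the claim directly for the simplest case $G = T' \circ K_1$, where $Ext(G)$ consists exactly of the added leaves: each external vertex $(v,1)$ is a leaf adjacent only to the internal vertex $(v,N(v))$, so $Ext(G)$ clearly contains all leaves of $G$ (the internal vertices all have degree $d_{T'}(v)+1 \geq 2$), it dominates $G$ since every internal vertex $(v,N(v))$ has the neighbor $(v,1)$, and it is a $2$-packing since any two external vertices $(u,1), (v,1)$ are at distance at least $3$ (the path between them must pass through at least two distinct internal vertices). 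For $S_2(T')$ the external vertices are the original vertices of $T'$, and one checks the same three properties: they form a $2$-packing because consecutive original vertices in $S_2(T')$ are at distance $3$, they dominate because every subdivision vertex is adjacent to an original vertex, and they include all leaves of $S_2(T')$ since the subdivision vertices have degree $2$.

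The second step is to show that the three properties (dominating, $2$-packing, containing all leaves) of $Ext$ are preserved under internal contraction, so that they hold for an arbitrary general corona $G = T' \circ \mathcal{P}$ obtained from $S_2(T')$ by a sequence of internal contractions. Under the definition of $G \circ \mathcal{P}$, the set $Ext(G \circ \mathcal{P}) = \{(v,1) : v \in V(G)\}$ is an independent set, each $(v,1)$ is adjacent precisely to the internal vertices $(v,A)$ with $A \in \mathcal{P}(v)$, and every internal vertex $(v,A)$ is adjacent to the external vertex $(v,1)$; this immediately gives that $Ext$ dominates and is independent. To see it is a $2$-packing, suppose $(u,1)$ and $(v,1)$ are at distance $\leq 2$; a common neighbor would have to be internal, say $(w,A)$, adjacent to both $(u,1)$ and $(v,1)$, which forces $w = u$ and $w = v$, hence $u = v$. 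For the leaf condition, I would argue that every internal vertex $(v,A)$ has degree at least $2$: it is adjacent to $(v,1)$, and since $A \neq \emptyset$ (partitions of nonempty neighborhoods have nonempty blocks; leaves of $T'$ with empty neighborhood don't arise in a connected $T'$ with $\geq 2$ vertices), there is some $u \in A$, and then the block $B \in \mathcal{P}(u)$ containing $v$ gives a neighbor $(u,B) \neq (v,1)$ of $(v,A)$. Thus no internal vertex is a leaf, so all leaves of $G$ lie in $Ext(G)$.

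Alternatively — and this may be the cleaner route — I would work directly from the definition of $G \circ \mathcal{P}$ without invoking Observation~\ref{nowa} at all, since the three bullet-point arguments above only use the explicit description of $V(G \circ \mathcal{P})$ and $E(G \circ \mathcal{P})$ together with connectedness of $T'$. The only place where the tree hypothesis (as opposed to an arbitrary connected graph) is genuinely needed is a minor one: to ensure $G$ is connected and that every vertex of $T'$ has a nonempty neighborhood, so that internal vertices really do have degree $\geq 2$; for an isolated vertex the claim would fail. The main obstacle, such as it is, is bookkeeping with the two-coordinate notation $(v,A)$ and being careful about the degenerate cases (a single edge $T' = K_2$, or $\mathcal{P}(v)$ a trivial partition), but there is no deep difficulty — the statement is essentially unpacking the definition. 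I would therefore present the proof as three short verifications (independence/domination, $2$-packing, leaf containment) directly on $G \circ \mathcal{P}$.
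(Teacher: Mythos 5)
Your proposal is correct and, in its final form (the three direct verifications on $G\circ\mathcal{P}$), takes essentially the same approach as the paper, which simply asserts the same three facts --- externals are pairwise at distance at least three, every internal vertex is adjacent to an external one, and every leaf is external --- without the detailed unpacking you supply. The preliminary detour through Observation~\ref{nowa} is unnecessary, as you yourself conclude.
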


\begin{proof}It follows from the following three facts: The distance between any two external vertices of $G$ is at least three. Next, every internal vertex of $G$ is adjacent to an external vertex. Finally, every leaf of $G$ belongs to $Ext(G)$.
\end{proof}
\begin{obs}\label{sklej_w}
Let $G$ and $H$ be general coronas of some trees. If they share only one vertex which is an external vertex in each of them, then $G\cup H$ is a general corona. 
\end{obs}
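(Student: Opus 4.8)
\emph{Proof plan.} The plan is to exhibit $G\cup H$ explicitly as a general corona of a tree obtained by gluing the underlying trees of $G$ and $H$. Write $G=G_1\circ\mathcal P_G$ and $H=H_1\circ\mathcal P_H$, where $G_1,H_1$ are trees and $\mathcal P_G,\mathcal P_H$ are vertex neighborhood partitions. The map $v\mapsto(v,1)$ is a bijection between $V(G_1)$ and $Ext(G)$, and similarly between $V(H_1)$ and $Ext(H)$. Since $G$ and $H$ meet exactly in the vertex $w$, which is external in both, we have $Ext(G)\cap Ext(H)=\{w\}$, so there are unique vertices $u\in V(G_1)$ and $u'\in V(H_1)$ with $w=(u,1)$ in $G$ and $w=(u',1)$ in $H$, and under the two bijections these are the only vertices that get matched.

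First I would form the graph $T_1$ obtained from the disjoint union of $G_1$ and $H_1$ by identifying $u$ with $u'$ into a single vertex $x$. No two edges are identified, since an edge of $G_1$ at $u$ and an edge of $H_1$ at $u'$ have their second endpoints in $N_{G_1}(u)$ and $N_{H_1}(u')$, respectively, and these sets are disjoint: a common vertex would correspond, via the bijections above, to a vertex of $Ext(G)\cap Ext(H)$ different from $w$, a contradiction. Hence $T_1$ is a simple graph, it is connected as the union of two connected graphs meeting in one vertex, and $|E(T_1)|=|E(G_1)|+|E(H_1)|=(|V(G_1)|-1)+(|V(H_1)|-1)=|V(T_1)|-1$, so $T_1$ is a tree. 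Moreover $N_{T_1}(x)=N_{G_1}(u)\cup N_{H_1}(u')$ is a disjoint union, and for every $v\in V(T_1)\setminus\{x\}$ the neighborhood $N_{T_1}(v)$ equals the neighborhood of $v$ in the unique one of $G_1,H_1$ containing it (with $u$ or $u'$ renamed to $x$ where it occurs).

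Next I would define the vertex neighborhood partition $\mathcal P=\{\mathcal P(v)\colon v\in V(T_1)\}$ by setting $\mathcal P(v)=\mathcal P_G(v)$ if $v\in V(G_1)\setminus\{u\}$, $\mathcal P(v)=\mathcal P_H(v)$ if $v\in V(H_1)\setminus\{u'\}$, and $\mathcal P(x)=\mathcal P_G(u)\cup\mathcal P_H(u')$; by the previous paragraph the last is a partition of $N_{T_1}(x)$, and the others are partitions of the respective neighborhoods. Finally I would verify that $T_1\circ\mathcal P=G\cup H$ by comparing vertex sets and edge sets directly from the definition of the general corona: the external vertex $(x,1)$ is identified with $w$, its internal neighbors $(x,A)$ split according to whether $A\in\mathcal P_G(u)$ or $A\in\mathcal P_H(u')$ into exactly the neighbors of $w$ in $G$ and the neighbors of $w$ in $H$, and for every other vertex $v$ and every edge of $T_1$ (each edge lies in exactly one of $G_1,H_1$) the corresponding vertices and edges of $T_1\circ\mathcal P$ coincide with those of $G$ or of $H$.

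I expect the only delicate point to be this last bookkeeping step, namely keeping the two relabelings ($u\mapsto x$ and $u'\mapsto x$) consistent and confirming that at the glueing vertex $x$ no vertex or edge of the corona is created or destroyed. This is a routine verification rather than a real obstacle; the hypothesis that $G$ and $H$ share a single vertex, external in both, is used precisely to guarantee that $T_1$ is a tree and that $N_{G_1}(u)$ and $N_{H_1}(u')$ are disjoint, so that $\mathcal P(x)$ is a genuine partition. \qed
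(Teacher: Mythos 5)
Your proposal is correct and follows essentially the same route as the paper: glue the underlying trees at the shared external vertex, take the union of the two neighborhood partitions at that vertex, and keep the remaining partitions unchanged. The paper states this more tersely, while you additionally verify the disjointness of the neighborhoods and that the glued graph is a tree, which is the same argument spelled out in more detail.
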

\begin{proof}Assume that $G$ and $H$ are general coronas of some trees $T_1$ and $T_2$, say $G=T_1\circ \mathcal{P}_1$ and $H=T_2\circ \mathcal{P}_2$ for some neighborhood partitions $\mathcal{P}_1$ and $\mathcal{P}_2$ of $T_1$ and $T_2$, respectively. Let $(v,1)$ be the only common external vertex of $G$ and $H$. Then the trees $T_1$ and $T_2$ share only $v$ and the union $T=T_1\cup T_2$ is a~tree. Now, let $\mathcal{P}$ be the family $\{\mathcal{P}(x)\colon x\in V(T)\}$, where  $\mathcal{P}(v)=\mathcal{P}_1(v)\cup \mathcal{P}_2(v)$, $\mathcal{P}(x)=\mathcal{P}_1(x)$ for $x\in V(T_1)-\{v\}$, and $\mathcal{P}(x)=\mathcal{P}_2(x)$ for $x\in V(T_2)-\{v\}$. Then $G\cup H$ is a~$\mathcal{P}$-corona of $T$, that is, $G\cup H=T\circ \mathcal{P}$, see Figure~\ref{rys_sklej_w}.
\end{proof}

\begin{figure}[htp] 
\begin{center}
	\begin{tikzpicture}
\fill(-2,1) circle(2pt);
\fill(-2,0) circle(2pt);
\fill(-1,0) circle(2pt) node[below]{$_{(v,A)}$};
\fill(-1,1) circle(2pt) node[above]{$_{(v,1)}$};
\draw (-2,1)--(-2,0)--(-1,0)--(-1,1);
\draw (-1.5,-1.2) node{${G}$};
\draw(-1.5,0.5) ellipse (37pt and 37pt);

\fill(1,1) circle(2pt) node[above]{$_{(v,1)}$};
\fill(1,0) circle(2pt)node[below]{$_{(v,B)}$};
\fill(2,0) circle(2pt);
\fill(2,1) circle(2pt) ;
\draw (1,1)--(1,0)--(2,0)--(2,1);
\draw (1.5,-1.2) node{${H}$};
\draw(1.5,0.5) ellipse (37pt and 37pt);

\fill(4,1) circle(2pt);
\fill(4,0) circle(2pt);
\fill(5,0) circle(2pt) node[below]{$_{(v,A)}$};
\fill(5.5,1) circle(2pt)  node[above]{$_{(v,1)}$};
\fill(6,0) circle(2pt) node[below]{$_{(v,B)}$};
\fill(7,0) circle(2pt);
\fill(7,1) circle(2pt);
\draw(4,1)--(4,0)--(5,0)--(5.5,1)--(6,0)--(7,0)--(7,1);
\draw (5.5,-1.2) node{${G\cup H}$};
\draw(5.5,0.5) ellipse (65pt and 35pt);

	\end{tikzpicture}
\caption{Graphs $G$, $H$ and $G\cup H$.}
\label{rys_sklej_w}
\end{center}
\end{figure}
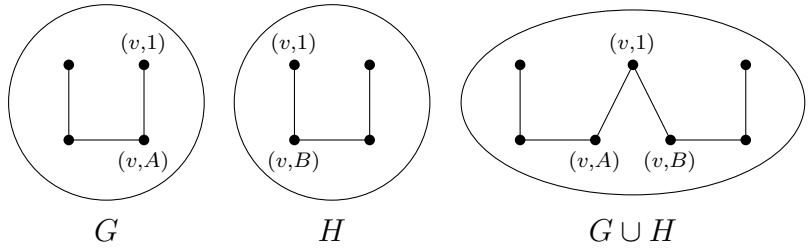

\begin{obs}\label{sklej_k}
Let $G$ be a general corona of a tree and let $(v,1)$ be an external vertex of $G$. If we contract two distinct neighbors of $(v,1)$, then the resulting graph is also a~general corona of a tree.
\end{obs}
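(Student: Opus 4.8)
The plan is to realise the required contraction as the passage to a coarser vertex neighbourhood partition. Write $G=T\circ\mathcal{P}$ for a tree $T$ and a vertex neighbourhood partition $\mathcal{P}=\{\mathcal{P}(x)\colon x\in V(T)\}$. By the definition of $E(G\circ\mathcal{P})$ the neighbours of the external vertex $(v,1)$ in $G$ are exactly the internal vertices $(v,A)$ with $A\in\mathcal{P}(v)$, so the two distinct neighbours of $(v,1)$ that we contract are $(v,A)$ and $(v,B)$ for some distinct blocks $A,B\in\mathcal{P}(v)$.

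First I would define $\mathcal{Q}=\{\mathcal{Q}(x)\colon x\in V(T)\}$ by $\mathcal{Q}(v)=(\mathcal{P}(v)\setminus\{A,B\})\cup\{A\cup B\}$ and $\mathcal{Q}(x)=\mathcal{P}(x)$ for $x\neq v$. Since $A$ and $B$ are disjoint blocks of the partition $\mathcal{P}(v)$ of $N_T(v)$, the family $\mathcal{Q}(v)$ is again a partition of $N_T(v)$; hence $\mathcal{Q}$ is a vertex neighbourhood partition of $T$ and $T\circ\mathcal{Q}$ is a general corona of the tree $T$. Moreover $\mathcal{P}\prec\mathcal{Q}$, so, as observed in the paragraphs preceding Observation~\ref{nowa}, $T\circ\mathcal{Q}$ is obtained from $G=T\circ\mathcal{P}$ by a sequence of internal contractions.

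The remaining --- and main --- step is to check that this sequence consists of the single contraction that merges $(v,A)$ with $(v,B)$, i.e.\ that the graph $G'$ obtained from $G$ by contracting $\{(v,A),(v,B)\}$ to a new vertex $k$ is isomorphic to $T\circ\mathcal{Q}$. The candidate isomorphism is the map that fixes every vertex common to $G$ and $T\circ\mathcal{Q}$ and sends $(v,A\cup B)$ to $k$; verifying that it preserves adjacency is a short case analysis over the two parts of the edge-set definition. Away from $v$ nothing changes. At $v$, the edge $(v,1)(v,A\cup B)$ corresponds to the edge $(v,1)k$ arising from the two edges $(v,1)(v,A)$ and $(v,1)(v,B)$, and for an edge $vu$ of $T$ the key remark is that $u$, being a single vertex, lies in at most one of the blocks $A$ and $B$; hence the edges of the form $(v,\cdot)(u,\cdot)$ incident with $(v,A)$ or with $(v,B)$ are exactly those incident with $(v,A\cup B)$ in $T\circ\mathcal{Q}$. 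The same remark shows that $(v,1)$ is the unique common neighbour of $(v,A)$ and $(v,B)$ in $G$, so the contraction produces neither a loop nor (after deleting the one resulting pair of parallel edges) a multiple edge, and therefore $G'\cong T\circ\mathcal{Q}$. This bookkeeping of the edges at $v$ is the only delicate point; the rest is immediate from the definitions.
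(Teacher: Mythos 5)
Your proposal is correct and follows essentially the same route as the paper: both realise the contraction by passing to the coarser partition $\mathcal{P}'(v)=(\mathcal{P}(v)-\{A,B\})\cup\{A\cup B\}$ and identifying the resulting graph with $T\circ\mathcal{P}'$. Your additional bookkeeping (that each $u\in N_T(v)$ lies in at most one of $A,B$, and that $(v,1)$ is the unique common neighbour of $(v,A)$ and $(v,B)$) just makes explicit the verification the paper leaves to the reader and the figure.
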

\begin{proof}Assume that $G=T\circ \mathcal{P}$ for some tree $T$ and its neighborhood partition~$\mathcal{P}$. Let $(v,A)$ and $(v,B)$ be distinct neighbors of $(v,1)$. Then the graph $G'$, obtained from $G$ by the contraction of $(v,A)$ and $(v,B)$, is a $\mathcal{P'}$-corona of $T$, where $\mathcal{P'}(v)=(\mathcal{P}(v)-\{A,B\})\cup \{A\cup B\}$, and $\mathcal{P'}(x)=\mathcal{P}(x)$ if $x\in V(T)-\{v\}$, see Figure~\ref{rys_sklej_k}.
\end{proof}

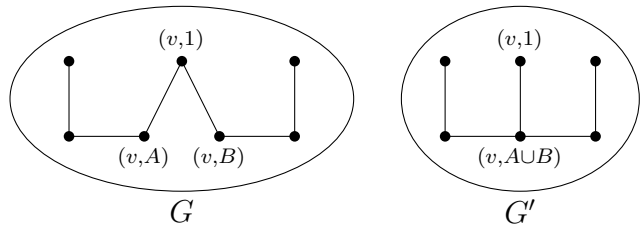
\begin{figure}[htp] 
\begin{center}
	\begin{tikzpicture}

\fill(-1,1) circle(2pt);
\fill(-1,0) circle(2pt);
\fill(0,0) circle(2pt) node[below]{$_{(v,A)}$};
\fill(0.5,1) circle(2pt) node[above]{$_{(v,1)}$};
\draw (-1,1)--(-1,0)--(0,0)--(0.5,1);
\draw (0.5,-1) node{${G}$};
\draw(5,0.5) ellipse (45pt and 35pt);
\fill(1,0) circle(2pt)node[below]{$_{(v,B)}$};
\fill(2,0) circle(2pt);
\fill(2,1) circle(2pt) ;
\draw (0.5,1)--(1,0)--(2,0)--(2,1);
\draw(0.5,0.5) ellipse (65pt and 35pt);
\fill(4,1) circle(2pt);
\fill(4,0) circle(2pt);
\fill(5,0) circle(2pt) node[below]{$_{(v,A\cup B)}$};
\fill(5,1) circle(2pt) node[above]{$_{(v,1)}$};
\fill(6,0) circle(2pt);
\fill(6,1) circle(2pt);
\draw(4,1)--(4,0)--(5,0)--(5,1);
\draw(5,0)--(6,0)--(6,1);
\draw (5,-1) node{${G'}$};
	\end{tikzpicture}
\caption{Graphs $G$ and $G'$.}
\label{rys_sklej_k}
\end{center}
\end{figure}

From Observations~\ref{sklej_w} and~\ref{sklej_k} we immediately have the next observation  (see Figure~\ref{rys_sklej} for an illustration).
\begin{obs}\label{sklej}Let $G$ and $H$ be general coronas of some trees. If they share only one edge such that exactly one of its end vertices  is an external vertex in each of $G$ and $H$, then the union $G\cup H$ is a general corona.
\end{obs}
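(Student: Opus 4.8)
The plan is to derive the statement directly from Observations~\ref{sklej_w} and~\ref{sklej_k} by a two-step manoeuvre: first I would duplicate the internal endpoint of the shared edge so that $G$ and $H$ meet in a single external vertex, then apply the gluing Observation~\ref{sklej_w}, and finally undo the duplication by one internal contraction, to which Observation~\ref{sklej_k} applies.

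Concretely, let $e=xy$ be the unique common edge of $G$ and $H$, where $x$ is the end vertex that is external in each of $G$ and $H$; then $y$ is internal in each of them, and (since $G\cap H=e$) the graphs $G$ and $H$ share exactly the vertices $x$ and $y$. I would first replace $H$ by an isomorphic copy $H^{*}$ obtained by renaming $y$ to a fresh vertex $y^{*}$ and keeping $x$. Then $H^{*}$ is again a general corona of a tree, $x$ is external in $H^{*}$, and $G$ and $H^{*}$ share only the vertex $x$, which is external in each; so Observation~\ref{sklej_w} yields that $F:=G\cup H^{*}$ is a general corona of a tree.

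Next, note that in $F$ the external vertex $x$ has $y$ and $y^{*}$ as two distinct (necessarily non-adjacent) neighbours, lying in $V(G)$ and $V(H^{*})$ respectively. Applying Observation~\ref{sklej_k} to $F$, to $x$, and to the pair $y,y^{*}$, the graph $F'$ obtained from $F$ by contracting $y$ and $y^{*}$ is again a general corona of a tree. It then remains to verify the routine point that $F'\cong G\cup H$: the contraction replaces $y$ and $y^{*}$ by a single vertex adjacent precisely to $x$, to the neighbours of $y$ in $G$, and to the neighbours of $y$ in $H$, which is exactly the adjacency of $y$ in $G\cup H$, while nothing else changes. Hence $G\cup H$ is a general corona (see Figure~\ref{rys_sklej}).

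The only point requiring care is this last identification, together with the precise reading of ``share only one edge'': one must make sure that $G$ and $H$ have no common vertices beyond $x$ and $y$, so that the neighbours of $y$ inside $G$ and inside $H$ are disjoint apart from $x$, and the contraction in $F$ reproduces $G\cup H$ without introducing multiple edges. Everything else is an immediate application of the two quoted observations, exactly as the surrounding text anticipates.
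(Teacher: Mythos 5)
Your proof is correct and is exactly the argument the paper intends: the paper derives this observation ``immediately'' from Observations~\ref{sklej_w} and~\ref{sklej_k}, and your duplicate-then-glue-then-contract manoeuvre is precisely how those two observations combine (cf.\ Figure~\ref{rys_sklej}). You have merely written out the routine identification $F'\cong G\cup H$ that the paper leaves implicit.
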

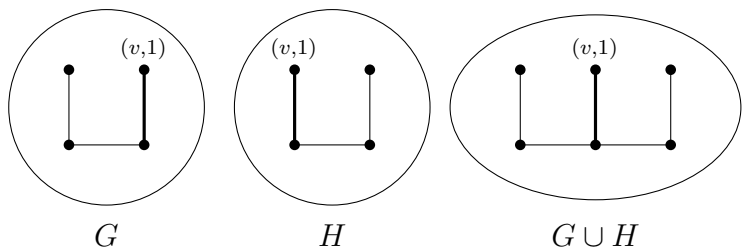
\begin{figure}[htp] 
\begin{center}
	\begin{tikzpicture}
\fill(-2,1) circle(2pt);
\fill(-2,0) circle(2pt);
\fill(-1,0) circle(2pt);
\fill(-1,1) circle(2pt) node[above]{$_{(v,1)}$};
\draw (-2,1)--(-2,0)--(-1,0)--(-1,1);
\draw (-1.5,-1.2) node{${G}$};
\draw(-1.5,0.5) ellipse (37pt and 37pt);

\fill(1,1) circle(2pt) node[above]{$_{(v,1)}$};
\fill(1,0) circle(2pt);
\fill(2,0) circle(2pt);
\fill(2,1) circle(2pt) ;
\draw[very thick](1,0)--(1,1);
\draw[very thick](-1,0)--(-1,1);
\draw[very thick](5,0)--(5,1);

\draw (1,1)--(1,0)--(2,0)--(2,1);
\draw (1.5,-1.2) node{${H}$};
\draw(1.5,0.5) ellipse (37pt and 37pt);

\fill(4,1) circle(2pt);
\fill(4,0) circle(2pt);
\fill(5,0) circle(2pt) ;
\fill(5,1) circle(2pt)  node[above]{$_{(v,1)}$};
\fill(6,0) circle(2pt);
\fill(6,1) circle(2pt);
\draw(4,1)--(4,0)--(5,0)--(5,1);
\draw (5,0)--(6,0)--(6,1);
\draw (5,-1.2) node{${G\cup H}$};
\draw(5,0.5) ellipse (55pt and 35pt);

	\end{tikzpicture}
\caption{Graphs $G$, $H$ and $G\cup H$.}
\label{rys_sklej}
\end{center}
\end{figure}

\section{Trees with domination subdivision number $3$}


The \emph{domination subdivision number} of a graph $G$, denoted by sd$(G)$, is the minimum number of edges which must be subdivided (where each edge can be subdivided at most once) in order to increase the domination number. Since the domination number of the graph $K_2$ does not increase when its edge is subdivided, we consider the subdivision numbers for connected graphs of order at least $3$. The domination subdivision number was defined by Velammal  \cite{vel} and since then it has been widely studied, see \cite{myn, bhv, fav3, fav2,HHH} to mention just a few.

It was shown in \cite{vel} that the domination subdivision number of a tree is either 1, 2 or 3. Let $\mathcal{S}_i$ be the family of trees with domination subdivision number equal to $i$ for $i\in \{1,2,3\}$. Some characterizations of the classes $S_1$ and $S_3$ were given in \cite{myn} and \cite{fav1}, respectively. In particular, the following constructive characterization of $S_3$ was given  in~\cite{fav1}. 

Let the label of a vertex $v$  be denoted by $l(v)$ and $l(v)\in \{A,B\}$. Now, let $\mathcal{F}$ be the family of labeled trees that (i) contains $P_4$, where leaves have label $A$ and support vertices have label $B$, and (ii) is closed under the following two operations, which extend a~labeled tree $T\in \mathcal{F}$ by attaching a~labeled path to a vertex $v\in V(T)$ in such a way that:
\begin{itemize}
\item  If $l(v) = A$, then we add a path $(x,y,z)$ (with labels $l(x)=l(y)= B$ and $l(z)= A$) and an edge $vx$.
\item If $l(v) =B$, then we add a path $(x,y)$ (with labels $l(x)=B$ and $l(y)=A$) and an edge $vx$. 
\end{itemize}

The following characterization of trees belonging to the class $\mathcal{S}_3$ was given in~\cite{fav1}.
\begin{thm}  \label{favaron} The next three statements are equivalent for a tree $T$ with at least three vertices:
\begin{itemize}
\item[$1.$]  $T$ belongs to the class $\mathcal{S}_3$.
\item[$2.$]   $T$ has a unique dominating $2$-packing containing all leaves of $T$.
\item[$3.$]  $T$ belongs to the family $\mathcal{F}$.
\end{itemize}
\end{thm}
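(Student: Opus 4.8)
The quickest route is to observe that the three-way equivalence is just the concatenation of two separate characterizations already proved in~\cite{fav1}: the constructive one, $T\in\mathcal{S}_3 \iff T\in\mathcal{F}$, and the packing one, $T\in\mathcal{S}_3 \iff T$ has a unique dominating $2$-packing through all its leaves. Transitivity then delivers all three equivalences at once, and strictly speaking nothing more need be said. For a self-contained argument phrased in the corona language of Section~1, I would instead prove the cycle $(3)\Rightarrow(2)\Rightarrow(1)\Rightarrow(3)$, as follows.

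$(3)\Rightarrow(2)$. Argue by induction on the number of path-attachment steps used to build $T\in\mathcal{F}$. For the base tree $P_4$, the two leaves form a dominating $2$-packing, and it is the only one containing both leaves because the two support vertices are then forced out. For the inductive step, each operation appends a pendant path of length $2$ at a $B$-vertex or of length $3$ at an $A$-vertex; in both cases the new tree is again a general corona of a tree and the set $A(T)$ of its $A$-labelled vertices is precisely its external set, so by Observation~\ref{ext} it is a dominating $2$-packing containing all leaves. (Alternatively one verifies the $2$-packing and domination properties of $A(T)$ directly from the labels.) Uniqueness propagates from the leaves inward: the freshly attached leaf must belong to any dominating $2$-packing containing all leaves, hence its neighbour is excluded, and continuing back along the attached path the membership of each of its vertices is forced; what remains is a dominating $2$-packing of the smaller tree through all of its leaves, unique by the induction hypothesis.

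$(2)\Rightarrow(1)$ and $(1)\Rightarrow(3)$. Both rely on the classical criterion describing exactly which edges of a graph increase the domination number when subdivided, together with the fact that $\mathrm{sd}(T)\in\{1,2,3\}$ for every tree on at least three vertices. For $(2)\Rightarrow(1)$, let $S$ be the unique dominating $2$-packing of $T$ containing all leaves, so that the closed neighbourhoods $\{N_T[s]:s\in S\}$ partition $V(T)$ and $|S|=\gamma(T)$; subdividing any one edge, and then any two edges, leaves $\gamma$ unchanged (one exhibits a dominating set of the same size, the uniqueness of $S$ being exactly what prevents a smaller one from appearing), so $\mathrm{sd}(T)\ge 3$ and hence $\mathrm{sd}(T)=3$. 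For $(1)\Rightarrow(3)$, root $T\in\mathcal{S}_3$ at an end of a longest path and inspect a deepest support vertex together with its pendant branch: if that branch were too short or too long, the subdivision criterion would let one raise $\gamma$ by subdividing at most two edges, contradicting $T\in\mathcal{S}_3$, so the branch must be exactly one of the two gadgets of the construction; removing it yields a tree $T'$ with $\mathrm{sd}(T')=3$, to which the induction hypothesis applies, and re-attaching the branch is the matching operation of $\mathcal{F}$.

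The main obstacle is the passage from a global hypothesis to the recursive structure, i.e.\ the steps $(2)\Rightarrow(1)$ and $(1)\Rightarrow(3)$. Both turn on a careful, case-laden use of the ``subdivision increases $\gamma$'' lemma and, in the second, on simultaneously tracking $\gamma(T)$, the dominating $2$-packing and the $A/B$-labelling along the terminal branch of a diametral path; the bookkeeping in $(3)\Rightarrow(2)$, by contrast, is made routine by Observations~\ref{nowa} and~\ref{ext}.
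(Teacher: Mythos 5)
The paper gives no proof of this theorem at all: it is stated as a result taken from~\cite{fav1}, so your opening observation --- that the three-way equivalence follows by transitivity from the two characterizations already established there --- is exactly the paper's (implicit) justification. The additional self-contained sketch you offer goes beyond what the paper attempts and is not needed.
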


Now we are in position to  give a new  characterization of trees belonging to the class $\mathcal{S}_3$. Namely, we shall show that all these graphs precisely are general coronas of trees. 

\begin{lem} \label{war1}
If a tree $T$ is a general corona, then $T$ belongs to $\mathcal{S}_3$.
\end{lem}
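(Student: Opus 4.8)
The plan is to combine Observation~\ref{ext} with the equivalence of statements~$1$ and~$2$ in Theorem~\ref{favaron}. By Observation~\ref{ext}, $Ext(T)$ is \emph{a} dominating $2$-packing of $T$ containing all leaves of $T$, so it will suffice to prove that it is the \emph{only} such set; Theorem~\ref{favaron} then gives $T\in\mathcal{S}_3$. Write $T=R\circ\mathcal{P}$ for a tree $R$; since $T$ has at least three vertices, $R$ has at least two vertices. I would prove uniqueness by induction on $|V(T)|$, the base case being $R\cong P_2$, i.e.\ $T\cong P_4$, where a direct inspection shows that the pair of leaves of $P_4$, which is exactly $Ext(P_4)$, is the unique dominating $2$-packing of $P_4$ containing both leaves.

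For the inductive step, suppose $|V(R)|\ge 3$, fix a leaf $u$ of $R$ with support $w$, and let $A$ be the block of $\mathcal{P}(w)$ containing $u$; since $R$ is connected with at least three vertices, $\mathcal{P}(w)$ has a block other than $\{u\}$. If $A=\{u\}$, delete the vertices $(u,1)$, $(u,\{w\})$, $(w,\{u\})$ from $T$; if $|A|\ge 2$, delete only $(u,1)$ and $(u,\{w\})$. In either case the resulting graph is isomorphic to a general corona $T'=(R-u)\circ\mathcal{P}'$ of the tree $R-u$ (with $\mathcal{P}'(w)=\mathcal{P}(w)-\{\{u\}\}$ in the first case, $\mathcal{P}'(w)=(\mathcal{P}(w)-\{A\})\cup\{A-\{u\}\}$ in the second, and $\mathcal{P}'$ equal to $\mathcal{P}$ at all other vertices); in particular $Ext(T')=Ext(T)-\{(u,1)\}$, and the induction hypothesis applies to $T'$.

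Now let $S$ be any dominating $2$-packing of $T$ containing all leaves of $T$. The leaf $(u,1)$ lies in $S$, so $(u,\{w\})\notin S$, and $(w,A)$, being at distance at most $2$ from $(u,1)$, is not in $S$ either. In the case $A=\{u\}$, the vertex $(w,\{u\})$ has neighbourhood $\{(u,\{w\}),(w,1)\}$ and hence can only be dominated through $(w,1)$, which forces $(w,1)\in S$. In all cases $S':=S\cap V(T')=S-\{(u,1)\}$. Since $T'$ is a connected induced subgraph of $T$, distances in $T'$ are at least those in $T$, so $S'$ is still a $2$-packing; it is still dominating in $T'$, because the only neighbour of the deleted vertex $(u,1)$ is $(u,\{w\})\notin V(T')$, so no vertex of $T'$ lost its dominator; and $S'$ contains all leaves of $T'$, because the only vertex of $T'$ that lost a neighbour is the one based at $w$, which either keeps degree at least $2$ (when $|A|\ge 2$) or is $(w,1)\in S'$ (when $A=\{u\}$). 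By the induction hypothesis, $S'=Ext(T')=Ext(T)-\{(u,1)\}$, and therefore $S=S'\cup\{(u,1)\}=Ext(T)$, which completes the induction.

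The step I expect to be the main obstacle is precisely the verification in the third paragraph that $S'$ is again a dominating $2$-packing of $T'$ containing all leaves of $T'$: one must be sure that removing $(u,1),(u,\{w\})$ (and possibly $(w,\{u\})$) neither leaves some vertex of $T'$ without a dominator nor produces a leaf of $T'$ lying outside $S'$. Both points concern only the unique vertex of $T'$ incident with a deleted vertex — the one based at $w$ — and it is exactly there that the forced membership $(w,1)\in S$ in the case $A=\{u\}$ is used.
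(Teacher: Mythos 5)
Your proof is correct, and its first paragraph is precisely the paper's entire proof: by Observation~\ref{ext} the set $Ext(T)$ is a dominating $2$-packing containing all leaves of $T$, and Theorem~\ref{favaron} then gives $T\in\mathcal{S}_3$. Where you genuinely diverge is in noticing that condition~$2$ of Theorem~\ref{favaron} demands a \emph{unique} such set while Observation~\ref{ext} only supplies existence, and in closing that gap with an induction that peels a leaf $u$ of the underlying tree $R$ off the corona. The paper's two-line proof is silent on uniqueness, so your version is the more complete one. I checked the details of your induction and they hold up: the deletion of $(u,1)$, $(u,\{w\})$ and, when $A=\{u\}$, of $(w,\{u\})$ does yield the general corona $(R-u)\circ\mathcal{P}'$; the memberships $(u,\{w\})\notin S$, $(w,A)\notin S$ and (in the case $A=\{u\}$) $(w,1)\in S$ are correctly forced; and the verification that $S'$ remains a dominating $2$-packing of $T'$ containing all its leaves is sound, the only vertex at risk being the one based at $w$. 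It is worth knowing, though, that the uniqueness you prove has nothing to do with coronas: a dominating $2$-packing is exactly an efficient dominating set (every closed neighbourhood meets it exactly once), and \emph{any} tree has at most one efficient dominating set containing all of its leaves. (Root $T$ at a leaf and take a deepest vertex $v$ in the symmetric difference of two such sets, say $v\in S_1\setminus S_2$; then $v$ is not a leaf, and a child $c$ of $v$ has $N[c]\cap S_2=\emptyset$, since $v\notin S_2$ while $c$ and its children, lying deeper than $v$, are excluded from $S_1$ by the packing condition and hence from $S_2$ by maximality.) That one general observation would let you keep the paper's short proof and drop the case analysis on the block $A$ entirely.
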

\begin{proof}  
From Observation~\ref{ext}, the set of external vertices of $T$ is a dominating \linebreak $2$-packing containing all leaves of $T$ and, consequently, by Theorem~\ref{favaron}, $T\in \mathcal{S}_3$.
\end{proof}

\begin{lem}\label{war2}
If a tree $T$ belongs to $\mathcal{S}_3$, then $T$ is a  general corona. 
\end{lem}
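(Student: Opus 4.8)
The plan is to use the equivalence between statements~1 and~2 in Theorem~\ref{favaron} and to reconstruct, from the unique dominating $2$-packing, a base tree and a neighborhood partition. So let $T\in\mathcal{S}_3$ and let $P$ be the unique dominating $2$-packing of $T$ that contains all leaves of $T$; I will produce a tree $R$ and a vertex neighborhood partition $\mathcal{Q}$ of $R$ with $R\circ\mathcal{Q}\cong T$, the set $P$ playing the role of the external vertices. The first step is to record the structural consequences of the hypotheses on $P$: no two vertices of $P$ are adjacent (they would be at distance~$2$); each $x\in V(T)\setminus P$ has \emph{exactly one} neighbor in $P$ (at least one since $P$ dominates, at most one since $P$ is a $2$-packing); and, since $P$ contains every leaf, each such $x$ has degree at least~$2$ and therefore also has a neighbor outside $P$. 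Writing $f(x)\in P$ for the unique $P$-neighbor of $x\notin P$ and putting $I_p:=f^{-1}(p)$, one gets $N_T(p)=I_p$ for each $p\in P$; hence the sets $S_p:=\{p\}\cup I_p$ form a partition of $V(T)$, each $T[S_p]$ is a star with center $p$, and each $I_p\neq\emptyset$ (as $T$ is connected and has at least two leaves, all of which lie in $P$, so $|P|\ge 2$).

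The second step is to form the tree $R$ by contracting each star $S_p$ to a single vertex, which I identify with $p$, so that $V(R)=P$. Using that $T$ is acyclic one checks that for distinct $p,q\in P$ there is \emph{at most one} edge of $T$ between $S_p$ and $S_q$, and that any such edge joins $I_p$ to $I_q$ (an edge leaving $S_p$ cannot have $p$ as an endpoint, because $N_T(p)=I_p\subseteq S_p$); a short edge count then gives $|E(R)|=|P|-1$, so $R$ is a tree. For $p\in P$ and $q\in N_R(p)$, let $\varphi_p(q)$ be the endpoint in $I_p$ of the unique edge of $T$ between $S_p$ and $S_q$, and for $a\in I_p$ set $A_a:=\varphi_p^{-1}(a)\subseteq N_R(p)$. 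The sets $A_a$, $a\in I_p$, are pairwise disjoint and cover $N_R(p)$; that none of them is \emph{empty} is exactly where ``$P$ contains all leaves'' is used, because then $a\notin P$ is not a leaf, so it has a neighbor $b\notin P$, which must lie in $I_q$ for some $q\neq p$ with $\varphi_p(q)=a$. Hence $\mathcal{Q}(p):=\{A_a:a\in I_p\}$ is a genuine partition of $N_R(p)$ and $a\mapsto A_a$ is a bijection from $I_p$ onto $\mathcal{Q}(p)$, so $\mathcal{Q}:=\{\mathcal{Q}(p):p\in P\}$ is a vertex neighborhood partition of $R$.

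The third step is to verify that the map $(p,1)\mapsto p$, $(p,A_a)\mapsto a$ is an isomorphism from $R\circ\mathcal{Q}$ to $T$. This reduces to an incidence check: the vertex sets have the same size; an edge of $R\circ\mathcal{Q}$ of the form $(p,1)(p,A_a)$ corresponds to the edge $pa$ of $T$, and these account exactly for the edges of $T$ joining $P$ to $V(T)\setminus P$; an edge of the form $(p,A_a)(q,A_b)$ occurs iff $q\in A_a$ and $p\in A_b$, i.e.\ iff $a$ and $b$ are the two endpoints of the unique edge of $T$ between $S_p$ and $S_q$, i.e.\ iff $ab\in E(T)$ with $a\in I_p$, $b\in I_q$, and these account exactly for the edges of $T$ inside $V(T)\setminus P$. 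Since $T$ has no edge inside $P$, the two families exhaust $E(T)$, so the map is an isomorphism and $T\cong R\circ\mathcal{Q}$ is a general corona of a tree.

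I expect the main obstacle to be precisely the legitimacy of the partitions $\mathcal{Q}(p)$, that is, excluding empty blocks $A_a$: this is the one point at which one genuinely needs $T\in\mathcal{S}_3$, equivalently that the dominating $2$-packing contains every leaf, rather than merely that some dominating $2$-packing exists; everything else is bookkeeping. An alternative approach is induction on the construction of the family $\mathcal{F}$ in Theorem~\ref{favaron}: $P_4=K_2\circ K_1$, and each of the two extension operations attaches a copy of $P_4$ to the current tree meeting it in a single external vertex, respectively in a single edge exactly one of whose endpoints is external, so that Observations~\ref{sklej_w} and~\ref{sklej} give the conclusion; this route has the extra cost of first checking that in a member of $\mathcal{F}$ the vertices labeled $A$ are exactly the external vertices.
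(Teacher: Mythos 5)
Your proof is correct, but it takes a genuinely different route from the paper's. The paper argues by induction on the order of $T$: it takes a longest path $(v_0,v_1,\ldots,v_k)$, splits $T$ into a copy of $P_4$ containing $v_0,v_1,v_2$ and a smaller tree $T_2\in\mathcal{S}_3$ (two cases according to whether $d_T(v_2)=2$ or $d_T(v_2)>2$), applies the induction hypothesis to $T_2$, and reassembles the two pieces using Observations~\ref{sklej_w} and~\ref{sklej} --- essentially the ``alternative approach'' you sketch at the end, except run on longest paths rather than on the recursive construction of $\mathcal{F}$. Your main argument instead extracts the corona structure globally and non-inductively: the dominating $2$-packing $P$ becomes $Ext$, the stars $S_p=\{p\}\cup N_T(p)$ partition $V(T)$, the base tree $R$ is the quotient of $T$ by these stars, and the blocks of $\mathcal{Q}(p)$ are the fibres of the map sending $q\in N_R(p)$ to the endpoint in $I_p$ of the unique $S_p$--$S_q$ edge. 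This buys an explicit description of the base tree and of the partition, and it isolates exactly where the hypothesis ``$P$ contains all leaves'' is needed (non-emptiness of the blocks $A_a$); note also that you use only the existence, not the uniqueness, of $P$ from Theorem~\ref{favaron}. The paper's induction is shorter once the gluing observations are available and dovetails with the characterization via $\mathcal{F}$. Two small points you should make explicit: to conclude that $R$ is a tree, the edge count $|E(R)|=|P|-1$ must be paired with connectivity of $R$ (which holds because $R$ is obtained by contracting connected subgraphs of a connected graph); and $|P|\ge 2$ ensures $N_R(p)\neq\emptyset$ for every $p$, so no $\mathcal{Q}(p)$ degenerates to the empty partition.
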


\begin{proof} We use induction on $n$, the number of vertices of a tree. The smallest tree belonging to $\mathcal{S}_3$ is a path $P_4$ and obviously $P_4$ is a $\mathcal{P}$-corona of $P_2$. Let $T\in \mathcal{S}_3$ be a tree on $n$ vertices, $n>4$. We will show that $T$ is a general corona. Let $P=(v_0,v_1,\ldots, v_k)$ be a~longest path in $T$. From the choice of $P$, since $T$ does not have a strong support vertex (by Theorem~\ref{favaron}), it follows that  $k\geqslant 4$ and $d_T(v_1)=2$. We consider two cases: $d_T(v_2)=2$, $d_T(v_2)>2$.\\
{\it Case 1: }$d_T(v_2)=2$. Let $T_1$ and $T_2$  denote  subtrees $T[\{v_0,v_1,v_2,v_3\}]$ and $T-\{v_0,v_1,v_2\}$, respectively. By Theorem~\ref{favaron}, the tree $T$ has a dominating $2$-packing $S$ containing all leaves of $T$ and certainly $\{v_0,v_3\}\subseteq S$. Consequently, $S-\{v_0\}$ is a dominating $2$-packing in $T_2$ containing all leaves of $T_2$. Again by Theorem~\ref{favaron}, the tree  $T_2$ belongs to $S_3$. Thus, by induction, $T_2$ is a~general corona. Since $v_3$ belongs to $S-\{v_0\}$, by Observation~\ref{ext} and Theorem~\ref{favaron}, $v_3\in Ext(T_2)$. Obviously $T_1=P_4$ is a general corona. Because $v_3$ is also an external vertex in $T_1$, and trees $T_1$ and $T_2$ do not share any other vertex, $T=T_1\cup T_2$ is a~general corona by Observation~\ref{sklej_w}.\\
\\
{\it Case 2: }$d_T(v_2)>2$. In this case, again by Theorem~\ref{favaron}, the tree $T$ has a dominating $2$-packing $S$ containing all leaves of $T$. Let $v'$ be the unique neighbor of $v_2$ belonging to $S$. Notice that $v'$ is a leaf if $v_2$ is a support vertex,  otherwise $v'=v_3$. Let $T_1$ and $T_2$ be subtrees $T[\{v_0,v_1,v_2,v'\}]$ and $T-\{v_0,v_1\}$, respectively.  It is easy to observe that $S-\{v_0\}$ is a dominating  $2$-packing in $T_2$ containing all leaves of $T_2$. Now, again by Theorem~\ref{favaron}, the tree  $T_2$ belongs to $S_3$. Thus, by induction, $T_2$ is a~general corona. Since $v'$ belongs to $S-\{v_0\}$, by Observation~\ref{ext} and Theorem~\ref{favaron}, $v'\in Ext(T_2)$.
Certainly $T_1=P_4$ is a~general corona and $v'$ is external vertex in $T_1$. In addition, $T_1$ and $T_2$ share only the edge $v_2v'$. Consequently, by Observation~\ref{sklej}, the tree $T=T_1\cup T_2$ is a general corona.
\end{proof}

Taking into account Observation~\ref{nowa}, Theorem~\ref{favaron}, Lemma~\ref{war1} and Lemma~\ref{war2} we have the summary result.
\begin{thm}\label{tw_gl} Let $T$ be a tree with at least three vertices. Then the following properties are equivalent:
\begin{itemize}
\item[$1.$] The domination subdivision number of $\ T$ is equal to $3$ $\ (i.e.\ T\in \mathcal{S}_3)$.
\item[$2.$]   $T$ has a unique dominating $2$-packing containing all leaves of $\ T$.
\item[$3.$]  $T$ belongs to the family $\mathcal{F}$.
\item[$4.$] $T$ is a general corona of a tree.
\item[$5.$] There exists a tree $T'$ such that $T$ is obtained from the $2$-subdivision $S_2(T')$ by a~sequence of internal contractions.
\item[$6.$] There exists a tree $T'$ such that $T$ is obtained from the corona $T'\circ K_1$ by a~sequence of internal splittings.
\end{itemize}
\end{thm}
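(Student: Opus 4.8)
The plan is to assemble Theorem~\ref{tw_gl} by chaining together the equivalences already established, so essentially no new mathematical work is needed beyond bookkeeping. The equivalence of statements~1, 2, and~3 is exactly Theorem~\ref{favaron}, which is quoted from~\cite{fav1}, so I would simply cite it. For the new content, I would close the loop between statement~1 (membership in $\mathcal{S}_3$) and statement~4 ($T$ is a general corona of a tree) by invoking Lemma~\ref{war1} (general corona $\Rightarrow$ $\mathcal{S}_3$) and Lemma~\ref{war2} ($\mathcal{S}_3$ $\Rightarrow$ general corona); together these give $1 \Leftrightarrow 4$.

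Next I would bring in Observation~\ref{nowa}, which already states that, for a tree $T$ with at least three vertices, the three properties ``$T$ is a general corona of a tree,'' ``$T$ is obtained from some $S_2(T')$ by a sequence of internal contractions,'' and ``$T$ is obtained from some $T'\circ K_1$ by a sequence of internal splittings'' are equivalent. Since these are precisely statements~4, 5, and~6 of the present theorem, Observation~\ref{nowa} yields $4 \Leftrightarrow 5 \Leftrightarrow 6$ directly.

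Putting the pieces together: Theorem~\ref{favaron} gives $1 \Leftrightarrow 2 \Leftrightarrow 3$; Lemmas~\ref{war1} and~\ref{war2} give $1 \Leftrightarrow 4$; and Observation~\ref{nowa} gives $4 \Leftrightarrow 5 \Leftrightarrow 6$. Hence all six statements are pairwise equivalent, which is what the theorem asserts. I would write this out as a short paragraph listing the chain of implications and naming the result supplying each link, then close with \qed.

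The only thing resembling an obstacle here is cosmetic: one must make sure the hypotheses line up — in particular that ``at least three vertices'' is the common standing assumption in Theorem~\ref{favaron}, Observation~\ref{nowa}, and the two lemmas (it is), and that the base case $P_4$ used inside Lemma~\ref{war2} is consistent with $P_4$ being the seed of the family $\mathcal{F}$ in Theorem~\ref{favaron}. Since all the real arguments (the induction in Lemma~\ref{war2}, the gluing Observations~\ref{sklej_w} and~\ref{sklej}, and the contraction/splitting correspondence in Observation~\ref{nowa}) are already in place, the proof of Theorem~\ref{tw_gl} itself is just the assembly step, and I would keep it to a few lines.
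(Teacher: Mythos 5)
Your proposal matches the paper exactly: the theorem is proved there by the one-line assembly "Taking into account Observation~\ref{nowa}, Theorem~\ref{favaron}, Lemma~\ref{war1} and Lemma~\ref{war2}," which is precisely the chain $1\Leftrightarrow2\Leftrightarrow3$, $1\Leftrightarrow4$, $4\Leftrightarrow5\Leftrightarrow6$ you describe. Nothing is missing; your version is just a slightly more explicit write-up of the same bookkeeping.
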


\end{document}